\newtheorem{theorem}{Theorem}[section]
\newtheorem{lemma}{Lemma}[section]
\theoremstyle{definition}
\newtheorem{definition}{Definition}[section]
\newcommand{\nice}{\displaystyle}
\newcommand{\ZZ}{\mathbb{Z}}
\newcommand{\CE}[1]{CE_k(#1)}
\newcommand{\CV}[1]{CV_k(#1)}
\newcommand{\CM}[1]{CM_k(#1)}
\newcommand{\di}[1]{diam(#1)}
\newcommand{\Trl}{T_{r,l}}
\newcommand{\vij}{v_{i,j}}
\newcommand{\vijp}{v_{i,j'}}
\newcommand{\vipjp}{v_{i',j'}}
\newcommand{\floor}[1]{\left\lfloor #1 \right\rfloor}
\newcommand{\ceil}[1]{\left\lceil #1 \right\rceil}
\def\qed{\hfill\hbox{${\vcenter{\vbox{
 \hrule height 0.4pt\hbox{\vrule width 0.4pt height 6pt
 \kern5pt\vrule width 0.4pt}\hrule height 0.4pt}}}$}}
\begin{document}

\title{Vertex and Mixed $k$-Diameter Component Connectivity}

\author{Adam Buzzard*}
\address{Moravian College (Undergraduate)}
\curraddr{1200 Main Street Bethlehem, PA 18018}
\email{stawb01@moravian.edu}

\author{Nathan Shank}
\address{Moravian College}
\curraddr{1200 Main Street Bethlehem, PA 18018}
\email{shank@moravian.edu}

\date{(May 2021), and in revised form (July 2021)}
\subjclass[2010]{Primary. 94C15}
\keywords{network reliability, connectivity, conditional connectivity, edge failure, vertex connectivity, mixed connectivity, graph theory}

\thanks{}

\begin{abstract}
In the $k$-diameter component connectivity model a network is consider operational if there is a component with diameter at least $k$.  Therefore, a network is in a failure state if every component has diameter less than $k$.  In this paper we find the vertex variant of the $k$-diameter component connectivity parameter, which is the minimum number of vertex deletions in order to put a network into a failure state, for particular classes of graphs. We also show the mixed variant by allowing vertex and edge failures within the network.  We show results for paths, cycles, complete, and complete bipartite graphs for both variants as well as perfect $r$-ary trees for the vertex variant.  
\end{abstract}

\maketitle


\section{Introduction}
Many different network structures can be modeled through graph theory. We think of nodes, hubs, people, stations, objects, etc. as vertices of a graph and the communication or connection between them as the edges of the graph. For many reasons, accidental and deliberate, these networks break or fail. Therefore, understanding the reliability and vulnerability of a network is crucial to maintaining and building a reliable network . 

When considering network reliability there are two issues to understand: what are the minimum requirements to maintain an operational network and what pieces of the network may fail. In a network modeled as a graph we can have edges, vertices, or both (mixed) fail. To keep a network operational we often consider what characterizes a failure state for a network. Several different network reliability models have been studied and Harary \cite{Harary1983} provided the general framework for these network reliability models by considering a property $P$ and a network, $G$. He defined a network to be operational if there is a component of $G$ which has property $P$ and therefore, if no component of $G$ has property $P$ the network is in a failure state. Therefore, the network reliability of $G$ based on property $P$ is the minimum number of failures so that no component of $G$ has property $P$. 

For example, the \textit{component order edge connectivity} \cite{Gross2006} considers the minimum number of edges that need to be removed from a graph so that all the components have order less that some specific bound. Similarly the \textit{component order vertex connectivity} \cite{Gross1998} considers vertex removal. For a survey of results see \cite{NET20300}. 

In this paper we will consider a graph operational if it contains a component of diameter at least $k$ for some fixed positive integer $k$. In practice, a network may need to have a component with a minimum diameter for reliability testing of a particular function, the spread of information must travel a minimum distance before it is deemed valuable, or a virus which is transmitted to neighbors may stay dormant until it has passed through a specific number of hosts or nodes. These, and other examples, motivate the need to consider networks which contain a component of minimum diameter. In \cite{ShankBuzz}, the authors considered the instance where edges fail, or are removed. In this paper we consider vertex failure as well as the mixed failure case for certain graph classes. 

\section{Background and Definitions}

We will be using common graph theory notation found in \cite{West2001}. Throughout we will assume that $G = (V,E)$ is a finite simple graph with vertex set $V$ and edge set $E$. For any edge set $D \subseteq E$, let $G-D$ denote the spanning subgraph of $G$ containing the vertex set $V$ and the edge set $E-D$. For any vertex set $H \subseteq V$, let $G-H$ denote the subgraph of $G$ induced by $V-H$. Similarly, if $V' \subseteq V$ and $E' \subseteq E(G-V')$ we will write $G-V'-E'$ to denote $(G-V')-E'$. For any set $A$, let $|A|$ denote the cardinality of $A$. 

If $u, v \in V$, let $d_G(u,v)$ denote the distance between $u$ and $v$ in $G$ (length of the shortest $u-v$ path in $G$). If the graph $G$ is clear, we will denote $d_G(u,v) = d(u,v)$. If $d(u,v) = k$ for some positive integer $k$, then we will say that $u$ is a \textit{k-neighbor} of $v$, $\{u,v\}$ a $k-pair$, and a $u-v$ path of length $k$ will be a \textit{$k$-path}. If there exists a vertex $x \in V$ so that $d(u,x) = k$, then we will say that \textit{$u$ has a $k-neighbor$} in $G$. If $G$ is a connected graph then the \textit{diameter of $G$} is the maximum distance between any two vertices.  If $G$ is not connected, the diameter is defined to be infinite.  A \text{component} of a graph $G$ is a connected and induced subgraph of $G$, call it $H$, so that no other vertex in $v \in V(G-H)$ is adjacent to a vertex in $H$.  Clearly, if $G$ has more than one component, then the diameter of $G$ is infinite.  

Throughout we will consider $k$ to be a positive integer and we will consider a network to be \textit{operational} if there is a component of diameter at least $k$. Thus, a network is in a \textit{failure state} if every component has diameter less than $k$. We can easily render a network into a failure state by removing all of the vertices or all of the edges. However, we are interested in finding the minimum number of vertex or edge deletions to produce a failure state. 

Clearly, if $k\geq 2$ and if a graph contains a $k$-pair, then the graph also contains a $(k-1)$-pair. This leads to the following lemma which will be useful for our considerations of vertex and edge connectivity.  

\begin{lemma}
\label{ifandonly}
Let $G=(V,E)$ be a graph and $k$ be a positive integer. The graph $G$ is in a \textit{failure state} if and only if there does not exist a $k$-pair in $G$. 
\end{lemma}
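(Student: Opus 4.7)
The plan is to prove both implications of the biconditional directly, leaning on the definition of distance and the fact that subpaths of a shortest path are themselves shortest paths.

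For the forward direction, I would argue by contrapositive: assume $G$ contains a $k$-pair $\{u,v\}$ and show $G$ is operational. Since $d_G(u,v)=k$ is finite, $u$ and $v$ must lie in the same component $C$. The diameter of $C$ is at least $d_C(u,v) = d_G(u,v) = k$, so $C$ witnesses that $G$ is operational. Hence $G$ is not in a failure state.

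For the backward direction, again by contrapositive, assume $G$ is not in a failure state, meaning some component $C$ has diameter at least $k$. Then there exist vertices $u,v \in V(C)$ with $d(u,v)\geq k$. If $d(u,v)=k$, then $\{u,v\}$ is already a $k$-pair and we are done. Otherwise, let $u = x_0, x_1, \ldots, x_n = v$ be a shortest $u$--$v$ path, where $n=d(u,v)>k$. The key observation is that any subpath of a shortest path is itself a shortest path between its endpoints: if $d(x_0,x_k)<k$ then concatenating a shorter $x_0$--$x_k$ path with $x_k, x_{k+1},\ldots,x_n$ would yield a $u$--$v$ walk of length less than $n$, contradicting the choice of $n$. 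Therefore $d(x_0, x_k) = k$ and $\{x_0, x_k\}$ is a $k$-pair in $G$.

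The main obstacle, such as it is, lies in the backward direction: one needs to produce an exact $k$-pair from a pair of possibly larger distance. The clean way to handle this is the subpath-of-a-shortest-path argument above, which avoids any induction on $k$ and makes the claim transparent. The forward direction is essentially bookkeeping given the definitions of component and diameter.
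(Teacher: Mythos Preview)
Your proof is correct. The paper does not give a formal proof of this lemma; it only precedes the statement with the remark that a graph containing a $k$-pair also contains a $(k-1)$-pair, which is exactly the subpath-of-a-shortest-path observation you spell out, and your argument makes explicit what the paper leaves to the reader.
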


Thus, in the process of making a failure state we must remove vertices or edges that impact each $k$-path. The following lemma shows that vertex disjoint $k$-paths can not both be impacted by only one vertex or edge removal. This will help us to show bounds on the number of vertex or edge removals we need to render our network into a failure state. 

\begin{lemma}
\label{firsttheorem}
Let $G=(V,E)$ be a graph and $k$ be a positive integer. If there exists $m$ vertex disjoint $k$-paths in $G$, then for any $v \in V$ or $e \in E$, $G-\{v\}$ and $G-\{e\}$ each have at least $m-1$ vertex disjoint $k$-paths.
\end{lemma}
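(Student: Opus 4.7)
The plan is to show that any single vertex deletion or single edge deletion can destroy at most one of the $m$ given vertex-disjoint $k$-paths, so at least $m-1$ survive.

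First I would fix the $m$ vertex-disjoint $k$-paths and call them $P_1, P_2, \ldots, P_m$. For the vertex case, let $v \in V$ be arbitrary. Because the vertex sets $V(P_1), \ldots, V(P_m)$ are pairwise disjoint, $v$ belongs to $V(P_i)$ for at most one index $i$. Every $P_j$ with $j \neq i$ therefore uses neither the vertex $v$ nor any edge incident to $v$, so $P_j$ remains a $k$-path in $G - \{v\}$. This gives at least $m-1$ vertex-disjoint $k$-paths in $G - \{v\}$.

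For the edge case, let $e = \{x,y\} \in E$ be arbitrary. The key observation is that an edge of the path $P_j$ must have both endpoints in $V(P_j)$. Since the $V(P_j)$'s are pairwise disjoint, the edge $e$ can be an edge of at most one $P_j$: if the endpoints $x,y$ lie in different $V(P_j)$'s, then $e$ is a chord between paths and is not used by any $P_j$; if both endpoints lie in the same $V(P_i)$, then $e$ can only possibly be an edge of $P_i$; if at least one endpoint lies outside $\bigcup_j V(P_j)$, then $e$ is an edge of no $P_j$. In every case, at most one $P_j$ uses $e$, so the remaining $m-1$ paths (which use none of their vertices or the edge $e$) are still $k$-paths in $G - \{e\}$, and they remain pairwise vertex-disjoint.

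There is no real obstacle here; the lemma is essentially a counting observation about disjointness. The only mild subtlety worth stating carefully in the write-up is that two vertex-disjoint paths can share no edges (an edge lies within one path's vertex set), so the argument for edge removal reduces to the same one-path-at-most observation as for vertex removal.
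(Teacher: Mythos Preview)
Your proposal is correct and takes essentially the same approach as the paper: both arguments rest on the observation that a single vertex (or edge) can lie in at most one of the $m$ pairwise vertex-disjoint paths, so the remaining $m-1$ survive the deletion. The paper phrases this as a short proof by contradiction and waves at the edge case with ``a similar argument holds,'' whereas you give the direct version and spell out explicitly why vertex-disjoint paths cannot share an edge; the underlying idea is identical.
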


\begin{proof}
Let $G=(V,E)$ be a graph and $k$ be a positive integer. Assume $G$ has $m$ vertex disjoint $k$-paths. Assume by way of contradiction that there exists some $v \in V$ such that $G-\{v\}$ has less than $m-1$ vertex disjoint $k$-paths. Then $v$ was a vertex in at least two of the vertex disjoint $k$-paths. Hence, $G$ did not contain $m$ vertex disjoint $k$-paths. A similar argument holds for edge removals. 

\end{proof}

It is often the case that edges are the object that fails. The \textit{$k$-diameter component edge connectivity} of the graph was introduced in \cite{ShankBuzz} and results were shown for path, complete, and complete bipartite graphs as well as perfect $r$-ary trees. In this paper we focus on the \textit{$k$-diameter component vertex connectivity} parameter as well as the mixed parameter which allows vertex and edge deletions.

\begin{definition}
Let $G=(V,E)$ be a graph and $k$ be a positive integer. A set $V' \subseteq V$ is a \textit{$k$-diameter component vertex disconnecting set} if $G-V'$ has no vertex with a $k$-neighbor.
\end{definition}

This means that a vertex set $V'$ is a $k$-diameter component vertex disconnecting set if every component of $G-V'$ has diameter less than $k$. If $V'$ is a $k$-diameter component vertex disconnecting set, then $G-V'$ is in a failure state.

Recall we are interested in finding the minimum number of vertices that can be removed to produce a failure state. This motivates the definition of the \textit{$k$-diameter component vertex connectivity parameter}.

\begin{definition}
Given a graph $G=(V,E)$ and a positive integer $k$, the \textit{$k$-diameter component vertex connectivity parameter} of $G$, denoted $\CV{G}$, is the size of the smallest $k$-diameter component vertex disconnecting set. 
\end{definition}

Thus, the $k$-diameter component vertex connectivity parameter is the size of the smallest vertex set $V'$ so that $G-V'$ is in a failure state. 

Similarly, we will consider edge disconnecting sets which will be used in our mixed deletion case. 

\begin{definition}
Let $G=(V,E)$ be a graph and $k$ be a positive integer. A set $E' \subseteq E$ is a \textit{$k$-diameter component edge disconnecting set} if $G-E'$ has no vertex with a $k$-neighbor. 
\end{definition} 

This means that an edge set $E'$ is a $k$-diameter component edge disconnecting set if every component of $G-E'$ has diameter less than $k$. If $E'$ is a$k$-diameter component edge disconnecting set, then $G-E'$ is in a failure state. 
 
 As with vertex deletions, we can also consider the minim number of edges whose removal produces a failure state. This motivates the definition of the \textit{$k$-diameter component edge connectivity parameter}. 
 
\begin{definition}
Given a graph $G=(V,E)$ and a positive integer $k$, the \textit{$k$-diameter component edge connectivity parameter} of $G$, denoted $\CE{G}$, is the size of the smallest $k$-diameter component edge disconnecting set.
\end{definition} 

Thus, the $k$-diameter component edge connectivity parameter is the minimum size of an edge set $E'$ so that $G-E'$ is in a failure state. 

It is often the case that vertices and edges fail which is investigated through the mixed deletion case. This was first introduced by Beineke and Harary \cite{connfunc}. The following definitions address the $k$-diameter component connectivity function, which is a mixed version of the $k$-diameter component connectivity involving both vertex and edge deletions. As is standard we will remove vertices first then remove edges. 

\begin{definition}
\label{minedge}
Let $G=(V,E)$ be a graph, $k$ be a positive integer, and $p \in \{0,1,...,\CV{G}\}$. Then the \textit{$k$-diameter component connectivity function} of $G$ is defined as $\CM{G,p}=\text{min}\{\CE{G-V'} : V' \subseteq V, |V'|=p\}$. 
\end{definition}

So $\CM{G,p}$ is the minimum number of edges that must be removed to render the graph into a failure state assuming we can also remove any $p$ vertices in the graph. Note that we must remove the $p$ vertices first then remove the least amount of edges. 

\begin{definition}
Let $G=(V,E)$ be a graph and $k$ be a positive integer. A \textit{$k$-diameter component connectivity pair} of $G$ for each $p \in \{0,1,...,\CV{G}\}$ is an ordered pair $(p,q)$, such that $\CM{G,p}=q$. 
\end{definition}

Two obvious connectivity pairs of $G$ are $(0,\CE{G})$ and $(\CV{G},0)$. For each value of $p$ where $0\leq p \leq \CV{G}$, there is a unique $k$-diameter component connectivity pair.

When trying to find the value of $\CM{G,p}$ it is often useful to consider which $p$ vertices we need to remove to minimize $\CE{G-V'}$. This motivates the following definition of a \textit{optimal $p$-set}. 

\begin{definition}
Let $G=(V,E)$ be a graph, $k$ be a positive integer, and $p$ be a nonnegative integer. Let $V' \subseteq V$ such that $|V'|=p$. We say $V'$ is an \textit{optimal $p$-set} if $\CM{G,p}=\CE{G-V'}$. 
\end{definition}

The following lemma will prove valuable for providing lower bounds. The lemma shows disjoint $k$-paths provide a lower bound for the number of vertex deletions, edge deletions, or mixed deletions needed to produce a failure state.

\begin{lemma} \label{lowerbound}
Let $G=(V,E)$ be a graph and let $k$ be a positive integer. If there exists $M$ vertex disjoint $k$-paths in $G$, then $\CV{G} \geq M$ and $\CE{G} \geq M$. Furthermore, if $CM_k(G,p)=q$, then $p+q \geq M$. 
\end{lemma}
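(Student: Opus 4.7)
The plan is to iterate Lemma \ref{firsttheorem} together with Lemma \ref{ifandonly}. Lemma \ref{firsttheorem} guarantees that removing a single vertex or edge kills at most one of the $M$ vertex disjoint $k$-paths, while Lemma \ref{ifandonly} tells us that a failure state contains no $k$-pair and hence no $k$-path at all. So for any sequence of removals I only need to track the inequality ``remaining vertex disjoint $k$-paths $\geq M - (\text{number of removals})$'' and then force the left side to be nonpositive.

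For $\CV{G} \geq M$, I would take an arbitrary $k$-diameter component vertex disconnecting set $V' = \{v_1, \ldots, v_p\}$ and induct on $i$ to show that $G - \{v_1, \ldots, v_i\}$ contains at least $M - i$ vertex disjoint $k$-paths, the inductive step being exactly Lemma \ref{firsttheorem}. At $i = p$ the graph $G - V'$ is in a failure state, so Lemma \ref{ifandonly} forces $M - p \leq 0$; minimizing over all such $V'$ yields $\CV{G} \geq M$. The argument for $\CE{G} \geq M$ is verbatim the same, applying the edge version of Lemma \ref{firsttheorem} to an edge disconnecting set $E' = \{e_1, \ldots, e_q\}$.

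For the mixed inequality, I would fix an optimal $p$-set $V'$ (so $\CE{G - V'} = q$) together with a smallest $k$-diameter component edge disconnecting set $E'$ of $G - V'$, necessarily of size $q$. Applying the vertex induction across the $p$ vertices of $V'$ and then the edge induction across the $q$ edges of $E'$ gives that $G - V' - E'$ contains at least $M - p - q$ vertex disjoint $k$-paths. Since $G - V' - E'$ is in a failure state by construction, Lemma \ref{ifandonly} forces $M - p - q \leq 0$, i.e.\ $p + q \geq M$.

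There is no real obstacle here beyond carefully chaining the two preceding lemmas through an induction; the only bookkeeping concern is keeping the vertex and edge removal phases cleanly separated in the mixed case, which is handled by the convention in Definition \ref{minedge} that vertices are removed first and edges second.
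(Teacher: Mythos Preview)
Your proposal is correct and mirrors the paper's own proof: both arguments iterate Lemma~\ref{firsttheorem} across the removed vertices and/or edges to track the decreasing count of vertex disjoint $k$-paths, then invoke Lemma~\ref{ifandonly} at the failure state to force that count to zero. The only cosmetic difference is that you spell out the induction explicitly and, in the mixed case, pick an optimal $p$-set rather than an arbitrary $(V^*,E^*)$ rendering a failure state, but the logic is identical.
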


\begin{proof}
Let $G=(V,E)$ be a graph and let $k$ be a positive integer. Assume there exists $M$ vertex disjoint $k$-paths in $G$. Let $V' \subseteq V$ such that $G-V'$ is in a failure state. Let $E' \subseteq E$ such that $G-E'$ is in a failure state. Then Lemma \ref{ifandonly} and multiple iterations of Lemma \ref{firsttheorem} implies that $|V'| \geq M$ and $|E'| \geq M$. 

Let $V^* \subseteq V$ and $E^* \subseteq E(G-V^*)$ such that $G-V^*-E^*$ is in a failure state. Then Lemma \ref{ifandonly} and Lemma \ref{firsttheorem} implies that $|V^*|+|E^*| \geq M$. Hence, if $CM_k(G,p)=q$, then $p+q \geq M$. 

\end{proof}




\section{Vertex Deletion Results}
In this section we will consider only vertex deletions; we compute $\CV{G}$ for specific graphs $G$. We provide results for path graphs, cycles, complete graphs, complete bipartite graphs, and perfect $r$-ary trees. Note that if $k=1$, then $CV_k$ is the minimum number of vertex deletions whose removal results in an edgeless graph. Therefore, we will always assume $k \geq 2$.

\subsection{Path Graphs}
	Consider the path graph on $n$ vertices, denoted $P_n$. Label the vertices consecutively from 1 to $n$ starting at a pendant vertex. Since any path of length $k$ has $k+1$ vertices, there are $\floor{\frac{n}{k+1}}$ vertex disjoint $k$-paths in $P_n$. By Lemma \ref{lowerbound}, $\CV{P_n} \geq \floor{\frac{n}{k+1}}$.
	
If we delete every vertex whose label is a multiple of $k+1$, then all of the remaining components have $k$ vertices, with the exception of at most one component which could have fewer than $k$ vertices. Therefore, the diameter of each remaining component will be less than $k$. This results in a total of $\floor{\frac{n}{k+1}}$ deletions.
Hence, $\CV{P_n} \leq \floor{\frac{n}{k+1}}$. These two observations imply the following:

\begin{theorem}\label{vertexpath}
 For every positive integer n, $$\CV{P_n}= \floor{\frac{n}{k+1}}.$$ 
\end{theorem}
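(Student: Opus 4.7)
The plan is to prove equality by establishing matching upper and lower bounds, both of which are sketched in the discussion preceding the theorem; my job is to turn those observations into a clean argument.

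For the lower bound, I would first exhibit $\floor{\frac{n}{k+1}}$ vertex disjoint $k$-paths in $P_n$. Writing the vertices as $v_1, v_2, \dots, v_n$ in order along the path, for each $i \in \{1, 2, \dots, \floor{\frac{n}{k+1}}\}$ I take the subpath on the $k+1$ consecutive vertices $v_{(i-1)(k+1)+1}, v_{(i-1)(k+1)+2}, \dots, v_{i(k+1)}$. Each such subpath has length $k$ (so it is a $k$-path, witnessed by its two endpoints), and since the vertex index ranges are disjoint, these $k$-paths are vertex disjoint. Lemma \ref{lowerbound} then gives $\CV{P_n} \geq \floor{\frac{n}{k+1}}$.

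For the upper bound, I would exhibit an explicit $k$-diameter component vertex disconnecting set. Let $V' = \{v_{j(k+1)} : 1 \leq j \leq \floor{\frac{n}{k+1}}\}$, so $|V'| = \floor{\frac{n}{k+1}}$. The graph $P_n - V'$ consists of the subpaths on index blocks $\{1,\dots,k\}$, $\{k+2,\dots,2k+1\}$, and so on, together with possibly one final block containing at most $k$ vertices (of length $n - (k+1)\floor{\frac{n}{k+1}} < k+1$). Every component of $P_n - V'$ is therefore a path on at most $k$ vertices, whose diameter is at most $k - 1 < k$. By Lemma \ref{ifandonly}, $P_n - V'$ is in a failure state, so $V'$ is a $k$-diameter component vertex disconnecting set and $\CV{P_n} \leq \floor{\frac{n}{k+1}}$.

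Neither step poses a real obstacle, since the path graph's linear structure makes the counting transparent; the only care needed is to verify that the possible leftover block at the end of the path, containing $n \bmod (k+1)$ vertices, still has fewer than $k+1$ vertices and hence diameter less than $k$, which is immediate from the definition of the floor function. Combining the two bounds yields the claimed equality.
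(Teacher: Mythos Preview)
Your proposal is correct and follows essentially the same approach as the paper: the paper's argument (given in the discussion immediately preceding the theorem) exhibits $\floor{\frac{n}{k+1}}$ vertex disjoint $k$-paths to invoke Lemma~\ref{lowerbound} for the lower bound, and deletes every vertex whose label is a multiple of $k+1$ for the upper bound, exactly as you do. You have simply made the vertex labeling of the disjoint $k$-paths and the leftover-block verification more explicit.
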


\subsection{Cycle Graphs}
	Consider the cycle graph on $n$ vertices, denoted $C_n$. Since $\di{C_n}=\floor{\frac{n}{2}}$, if $k > \floor{\frac{n}{2}}$, then $C_n$ is already in a failure state and no deletions are necessary. If $k \leq \floor{\frac{n}{2}}$, then at least one deletion must be made. Notice that the deletion of any single vertex from $C_n$ leaves a path graph on $n-1$ vertices. Then, by Theorem \ref{vertexpath}, $\CV{C_n}=\floor{\frac{n-1}{k+1}}+1$. Hence, we have the following:

\begin{theorem}
\label{vertexcycle}
For every positive integer n, $$\CV{C_n}= \begin{cases} 
					 0 & \text{if } k > \floor{\frac{n}{2}} \\
					 \floor{\frac{n+k}{k+1}}& \text{if } k \leq \floor{\frac{n}{2}}. \\
 \end{cases}$$
\end{theorem}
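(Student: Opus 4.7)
The plan is to split on the two cases already indicated in the statement and reduce the cycle problem to the path problem via a single vertex deletion.

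First I would handle the easy case $k > \floor{n/2}$. Since $\di{C_n} = \floor{n/2} < k$, no two vertices of $C_n$ are at distance $k$, so by Lemma \ref{ifandonly} the graph is already in a failure state and $\CV{C_n} = 0$.

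For the main case $k \leq \floor{n/2}$, the diameter is at least $k$, so $C_n$ contains a $k$-pair and is not in a failure state; any $k$-diameter component vertex disconnecting set $V'$ must therefore be nonempty. The key structural observation is that for any single vertex $v \in V(C_n)$, the graph $C_n - \{v\}$ is isomorphic to $P_{n-1}$. I would use this in both directions:

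\textbf{Upper bound.} Delete any one vertex to produce a copy of $P_{n-1}$, and then apply Theorem \ref{vertexpath} to remove $\floor{\frac{n-1}{k+1}}$ more vertices so that every remaining component has diameter less than $k$. This gives a disconnecting set of size $\floor{\frac{n-1}{k+1}} + 1$.

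\textbf{Lower bound.} Let $V'$ be any optimal disconnecting set; as noted, $|V'|\geq 1$. Fix any $v \in V'$. Then $C_n - \{v\} \cong P_{n-1}$, and $V' \setminus \{v\}$ must be a $k$-diameter component vertex disconnecting set of this path. By Theorem \ref{vertexpath}, $|V'|-1 \geq \floor{\frac{n-1}{k+1}}$.

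Combining the two bounds and using the identity $\floor{\frac{n-1}{k+1}}+1 = \floor{\frac{n-1+(k+1)}{k+1}} = \floor{\frac{n+k}{k+1}}$ would yield the claimed formula. There is no real obstacle here; the only point that deserves care is justifying that \emph{some} vertex must be deleted in the nontrivial case (which follows immediately from $\di{C_n}\geq k$ and Lemma \ref{ifandonly}) so that the reduction to $P_{n-1}$ actually costs one vertex.
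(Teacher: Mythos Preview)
Your proposal is correct and follows essentially the same approach as the paper: handle the trivial case via the diameter of $C_n$, and in the nontrivial case reduce to $P_{n-1}$ by deleting a single vertex and invoking Theorem~\ref{vertexpath}. Your write-up is in fact more explicit than the paper's about the lower bound (the paper simply asserts that at least one deletion is needed and then applies Theorem~\ref{vertexpath}), and you correctly supply the floor identity $\floor{\frac{n-1}{k+1}}+1=\floor{\frac{n+k}{k+1}}$ that the paper leaves implicit.
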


\subsection{Complete Graphs}
	Consider the complete graph on $n$ vertices, denoted $K_n$. Since the diameter of a complete graph is 1 and $k \geq 2$, $K_n$ is already in a failure state. Thus, we see the following obvious result:
	
	\begin{theorem}
	\label{vertexcomplete} 
	For any positive integer $n$, $$\CV{K_n}=0.$$
 \end{theorem}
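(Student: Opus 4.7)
The plan is essentially a one-line observation dressed up with the tools already assembled in the paper. I would prove this by invoking Lemma \ref{ifandonly} (the failure-state characterization in terms of $k$-pairs) and simply checking that $K_n$ has no $k$-pair to begin with.

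First, I would handle $n=1$ as a trivial case: $K_1$ has no pair of distinct vertices at all, so there is no $k$-pair and the graph is already in a failure state by Lemma \ref{ifandonly}. For $n \geq 2$, every pair of distinct vertices in $K_n$ is joined by an edge, so $d_{K_n}(u,v) = 1$ for all $u \neq v$. Since the standing assumption in this section is $k \geq 2$, no vertex of $K_n$ has a $k$-neighbor, hence $K_n$ contains no $k$-pair. Applying Lemma \ref{ifandonly} again, $K_n$ is in a failure state.

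Because $K_n$ is already in a failure state, the empty set $V' = \emptyset \subseteq V$ is a $k$-diameter component vertex disconnecting set of size $0$. Since $\CV{K_n}$ is a nonnegative integer, this forces $\CV{K_n} = 0$, completing the proof.

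There is no real obstacle here; the only subtle point worth noting is that the result relies on the blanket assumption $k \geq 2$ made at the start of Section 3, without which the $n \geq 2$ case of the argument would fail (since $K_n$ contains many $1$-pairs). I would simply flag this dependence and move on.
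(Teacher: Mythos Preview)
Your proposal is correct and matches the paper's own justification: the paper simply observes that the diameter of $K_n$ is $1$ and $k \geq 2$, so $K_n$ is already in a failure state, which is exactly your argument (with slightly more care about the $n=1$ case and an explicit appeal to Lemma~\ref{ifandonly}).
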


\subsection{Complete Bipartite Graph}
Now we will consider a complete bipartite graph $K_{a,b}=(V,E)$ with parts $A$ and $B$ where $V=A \cup B$, $A \cap B =\emptyset$, $|A|=a>0$, and $|B|=b>0$. 

\begin{theorem}
\label{vertexcompbipart}
For any positive integer $a$ and $b$,

$$\CV{K_{a,b}}=\begin{cases} 
					 0 & \text{if } a=b=1, \\
					 0 & \text{if } k>2  \text{ and }\max \{a, b\} \geq 2, \\
					 \text{min}\{a,b\} & \text{if } k=2 \text{ and } \max\{a, b\} \geq 2.  \\
 \end{cases}$$
\end{theorem}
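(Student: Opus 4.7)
The plan is to treat the three cases of the piecewise definition separately, since the first two reduce to trivial observations about the diameter of $K_{a,b}$ while the third requires a genuine argument. For $a=b=1$ the graph is a single edge of diameter $1$, so since $k\geq 2$ no $k$-pair exists and $\CV{K_{1,1}}=0$ by Lemma \ref{ifandonly}. For $k>2$ with $\max\{a,b\}\geq 2$, any two vertices of $K_{a,b}$ are either adjacent (if in opposite parts) or joined by a common neighbor (if in the same part), so the diameter is exactly $2<k$; again no $k$-pair exists and $\CV{K_{a,b}}=0$.

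For the substantive case $k=2$ and $\max\{a,b\}\geq 2$, assume without loss of generality that $a\leq b$, so $\min\{a,b\}=a$. The upper bound $\CV{K_{a,b}}\leq a$ follows by deleting all $a$ vertices of the smaller part $A$: the resulting graph consists of $b$ isolated vertices, which contains no $2$-pair and is therefore in a failure state.

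The lower bound is where I expect the real work. The natural idea of invoking Lemma \ref{lowerbound} with $a$ vertex-disjoint $2$-paths fails because each $2$-path consumes three vertices, and when $b<2a$ the smaller part cannot supply enough endpoints to pack that many pairwise disjoint $2$-paths. Instead I would argue directly on the structure of $K_{a,b}-V'$. For any $V'\subseteq V$, write $V'_A=V'\cap A$ and $V'_B=V'\cap B$; then $K_{a,b}-V'$ is exactly the complete bipartite graph $K_{a-|V'_A|,\,b-|V'_B|}$. A routine check shows that $K_{x,y}$ contains no pair of vertices at distance $2$ if and only if $x=0$, $y=0$, or $x=y=1$. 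Hence if $K_{a,b}-V'$ is in a failure state then $|V'_A|=a$, or $|V'_B|=b$, or $(|V'_A|,|V'_B|)=(a-1,b-1)$. The first option forces $|V'|\geq a$, the second forces $|V'|\geq b\geq a$, and the third gives $|V'|=a+b-2\geq a$ because $\max\{a,b\}\geq 2$. In every subcase $|V'|\geq a=\min\{a,b\}$, which combined with the upper bound proves the theorem.
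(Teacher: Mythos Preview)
Your proof is correct and follows essentially the same approach as the paper: both arguments characterize the induced subgraphs $K_{a-x,b-y}$ of $K_{a,b}$ that are in a failure state (namely those with one part empty or with both parts of size~$1$), and then compare the three resulting deletion counts $a$, $b$, and $(a-1)+(b-1)$ to conclude that the minimum equals $\min\{a,b\}$. Your write-up is slightly more explicit in naming the subcases and in justifying $a+b-2\geq a$ via $b\geq 2$, but the underlying argument is the same.
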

\begin{proof}
Let $K_{a,b}=(V,E)$ be a complete bipartite graph with parts $A$ and $B$ where $V=A \cup B$, $A \cap B =\emptyset$, $|A|=a>0$, and $|B|=b>0$. The diameter of a complete bipartite graph is $2$ unless $a=b=1$, in which case the diameter is 1 and $K_{1, 1}$ is already in a failure state for all $k>1$. Consider when $a \geq 2$ or $b \geq 2$. If $k>2$, then $K_{a,b}$ is already in a failure state. Now consider when $k=2$. The only induced subgraphs of $K_{a,b}$ which are in a failure state are $K_{1,1}$, subgraphs of $A$, and subgraphs of $B$. To produce $K_{1,1}$, we must delete all but two vertices: one vertex from $A$ and one vertex from $B$. Thus, the resulting number of vertex deletions is $(a-1) + (b-1)$. Since $A$ is the subgraph of $A$ with the most vertices, we only need to consider deleting vertices to produce $A$. In order to produce $A$, we must delete all vertices from $B$ and, therefore, the resulting number of vertex deletions is $b$. Similarly, to produce $B$, $a$ vertex deletions are necessary. If either $a \geq 2$ or $b \geq 2$, then it is easily seen that $(a-1)+(b-1) \geq \text{min}\{a,b\}$. 

\end{proof}


\subsection{Perfect $r$-ary Trees}

Throughout we will assume $r$ and $l$ are positive integers and let $\Trl = (V,E)$ be a perfect $r-ary$ tree with height $l$. This means that $\Trl$ has $\frac{r^{l+1}-1}{r-1}$ vertices and $\frac{r(r^{l+1}-1)}{r-1}-r$ edges. We can enumerate the vertices and edges of $\Trl$ as follows: 
\begin{align*}
	V &= \{v_{i,j}: 1\leq i \leq l+1 , 1 \leq j \leq r^{(l+1)-i}\}, \text{ and } \\
 E&=\{(v_{i,j},v_{i-1,m}): 2 \leq i \leq l+1, 1 \leq j \leq r^{(l+1)-I},\\
&\indent \indent  (j-1)r+1 \leq m \leq jr \}.
\end{align*}

We will say that vertex $v_{i,j} \in V$ is on \textit{level} $i$. Notice that the root vertex is on level $l+1$ and the leaves are on level $1$. 

\begin{figure}[h]\label{complete2ary}
\includegraphics[width=.7\textwidth]{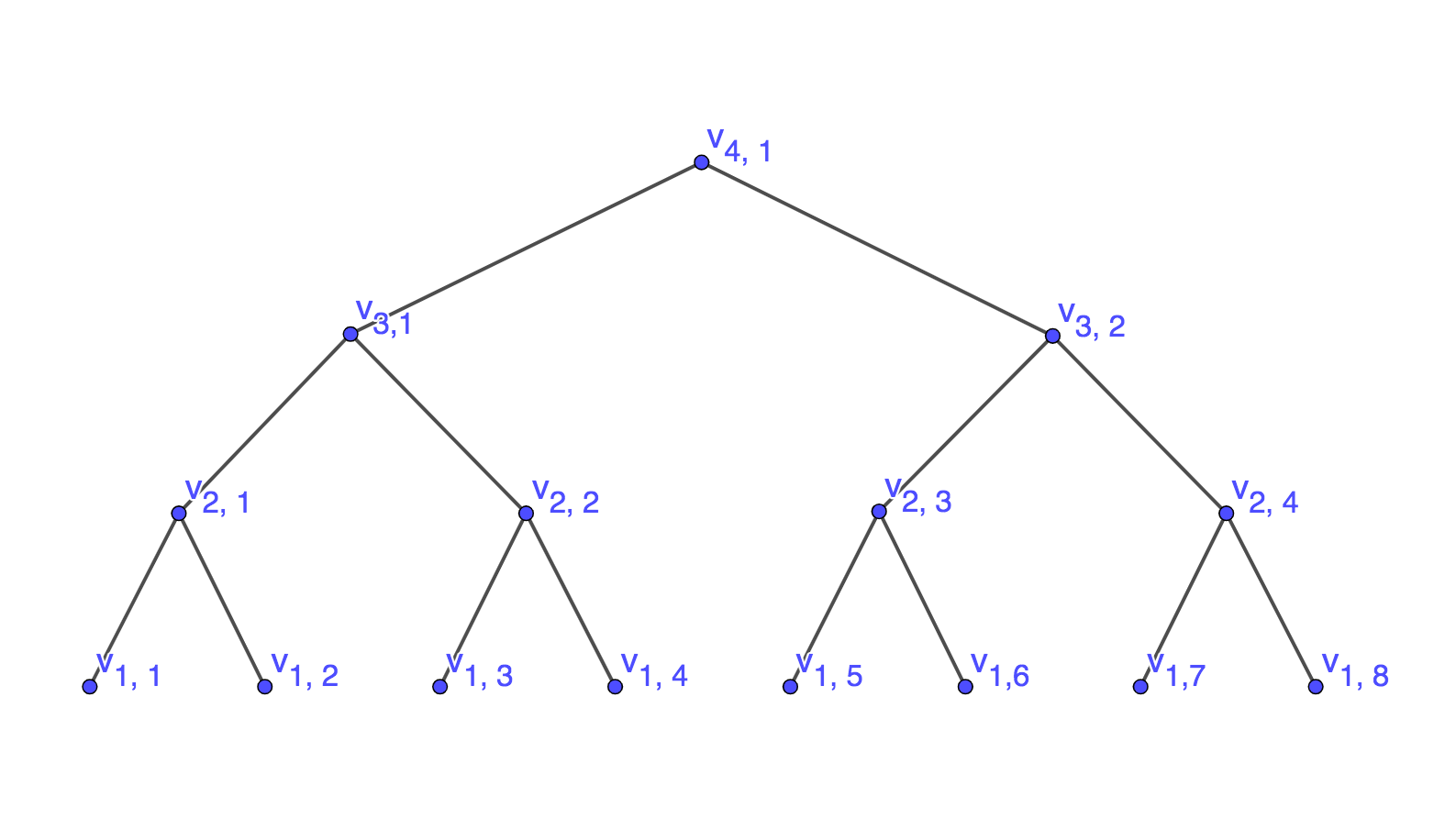}
\caption{A complete 2-ary tree with height 3 ($r=2, l=3$)}
\end{figure}

Let $h$ be a positive integer. Fix a vertex $v_{i,j} \in V$, with $i>h$ and consider the subtree $T_{v_{i,j}}^h$ of $\Trl$ induced by $v_{i,j}$ and all of its descendants at a distance at most $h$. Notice in $T_{v_{i,j}}^h$, the degree of $v_{i,j}$ is $r$ and any vertex $x$ of degree 1 in $T_{v_{i,j}}^h$ will satisfy $d(x,v_{i,j}) = h$. Also notice that $T_{v_{i,j}}^h$ is a perfect $r$-ary tree of height $h$.

The following lemma establishes a set $V'$ of vertices which will form our minimum $k$-diameter component vertex disconnecting set for $\Trl$. The cardinality of this set is shown so that we can prove it is in fact the minimum $k$-diameter component vertex disconnecting set. 

\begin{lemma}
\label{cardvprimevertex}
Let $\Trl=(V,E)$. Let $ V' \subset V$ such that 
$$ V'=\left\{v_{m \left(\ceil{\frac{k}{2}}+1\right),j} \in V: 1 \leq m \leq \floor{\frac{l+1}{\ceil{\frac{k}{2}}+1}}, 1\leq j \leq r^{l+1-m\left(\ceil{\frac{k}{2}}+1\right)}\right\}.$$ Then, $$|V'|=\frac{r^{l+1}-r^{l+1-\floor{\frac{l+1}{\ceil{\frac{k}{2}
}+1}}\left(\ceil{\frac{k}{2}}+1\right)}}{r^{\ceil{\frac{k}{2}}+1}-1}.$$ 
\end{lemma}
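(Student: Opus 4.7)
The statement is a pure counting/closed-form computation: $V'$ is defined as a union, over certain levels, of all vertices on those levels, and we just need to tally the total and put it in the stated closed form. So the plan is to set up the summation, recognize it as a finite geometric series, and simplify.

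First I would introduce shorthand $s=\ceil{\frac{k}{2}}+1$ and $M=\floor{\frac{l+1}{s}}$, so that
\[
V'=\bigcup_{m=1}^{M}\{v_{ms,j} : 1\leq j\leq r^{(l+1)-ms}\}.
\]
These sets are pairwise disjoint because they lie on distinct levels $ms$ (different values of $m$ give different $i$-coordinates). From the enumeration of $V$ in the paper, the number of vertices on level $i$ of $\Trl$ is exactly $r^{(l+1)-i}$. Therefore level $ms$ contributes $r^{l+1-ms}$ vertices to $V'$, and
\[
|V'|=\sum_{m=1}^{M} r^{l+1-ms} = r^{l+1}\sum_{m=1}^{M}\left(r^{-s}\right)^{m}.
\]

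Next I would evaluate the finite geometric sum. Since $r\geq 1$ and $s\geq 1$, if $r=1$ the formula reduces to $M$ vertices per level and the statement can be handled as a limiting case; for $r\geq 2$ we have $r^{-s}\neq 1$ and the standard formula gives
\[
\sum_{m=1}^{M}(r^{-s})^{m}=\frac{r^{-s}-r^{-s(M+1)}}{1-r^{-s}}.
\]
Multiplying through by $r^{l+1}$ and clearing the negative powers (multiply numerator and denominator by $r^{s}$) yields
\[
|V'|=\frac{r^{l+1}-r^{l+1-Ms}}{r^{s}-1}.
\]

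Finally I would unfold the abbreviations $s=\ceil{\frac{k}{2}}+1$ and $M=\floor{\frac{l+1}{\ceil{k/2}+1}}$ to recover the stated expression
\[
|V'|=\frac{r^{l+1}-r^{l+1-\floor{\frac{l+1}{\ceil{k/2}+1}}\left(\ceil{\frac{k}{2}}+1\right)}}{r^{\ceil{\frac{k}{2}}+1}-1}.
\]
There is no real obstacle here beyond keeping the exponents and floors straight; the only point worth flagging is the disjointness of the level-slices (so that cardinalities add) and the edge case $r=1$, which is trivial but should be acknowledged so the geometric series manipulation is valid.
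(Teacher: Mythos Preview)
Your proposal is correct and follows essentially the same approach as the paper: set up the sum $\sum_{m=1}^{M} r^{l+1-ms}$, factor out $r^{l+1}$, and evaluate the finite geometric series. Your shorthand $s,M$, explicit mention of disjointness of the level slices, and the $r=1$ caveat are minor additions of rigor, but the argument is the same.
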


\begin{proof} 
Consider $V'$ as defined above. Then by summing over all possible choices of $m$ we see $$\nice |V'|=\sum_{m=1}^{\floor{\frac{l+1}{\ceil{\frac{k}{2}}+1}}} r^{l+1-m\left(\ceil{\frac{k}{2}}+1\right)}.$$ 
Simplifying the previous expression, we see:

\begin{align*}
|V'| &=r^{l+1}\sum_{m=1}^{\floor{\frac{l+1}{\ceil{\frac{k}{2}}+1}}} r^{-m\left(\ceil{\frac{k}{2}}+1\right)} \\
 &=r^{l+1}\left(\frac{1-r^{-\floor{\frac{l+1}{\ceil{\frac{k}{2}}+1}}\left(\ceil{\frac{k}{2}}+1\right)}}{r^{\ceil{\frac{k}{2}}+1}-1}\right)\\
 &=\frac{r^{l+1}-r^{l+1-\floor{\frac{l+1}{\ceil{\frac{k}{2}
}+1}}\left(\ceil{\frac{k}{2}}+1\right)}}{r^{\ceil{\frac{k}{2}}+1}-1}.
\end{align*}
\end{proof}

Now we will show $\CV{\Trl}=|V'|$. For each $\vij \in V'$, $T_{\vij}^{\ceil{\frac{k}{2}}}$ is not in a failure state. We will also show that for any $\vij,\vipjp \in V'$ with $\vij \neq \vipjp$, $T_{\vij}^{\ceil{\frac{k}{2}}}$ and $ T_{\vipjp}^{\ceil{\frac{k}{2}}}$ are disjoint. Thus, for each vertex in $V'$, we need at least one vertex deletion to produce a subgraph of $\Trl$ which is in a failure state. We will also show that $\Trl-V'$ is in a failure state and, therefore, $V'$ forms a minimum $k$-diameter disconnecting set. For the sake of simplicity, we will denote $T_{\vij}^{\ceil{\frac{k}{2}}}$ with $T_{\vij}$.

\begin{theorem}
\label{VertexTrees}
Let $r, l$ and $k$ be positive integers. Let $\Trl=(V,E)$ be a perfect $r-ary$ tree with height $l$. Then, 
$$\CV{\Trl} = \frac{r^{l+1}-r^{l+1-\floor{\frac{l+1}{\ceil{\frac{k}{2}
}+1}}\left(\ceil{\frac{k}{2}}+1\right)}}{r^{\ceil{\frac{k}{2}}+1}-1}.$$
\end{theorem}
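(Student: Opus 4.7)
The strategy is to prove matching bounds: $\CV{\Trl} \le |V'|$ by showing that the set $V'$ from Lemma \ref{cardvprimevertex} is itself a $k$-diameter component vertex disconnecting set, and $\CV{\Trl} \ge |V'|$ by exhibiting $|V'|$ pairwise vertex-disjoint $k$-paths and invoking Lemma \ref{lowerbound}. Since Lemma \ref{cardvprimevertex} already supplies the cardinality of $V'$, the formula will follow immediately.

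For the upper bound I would describe the components of $\Trl - V'$ explicitly. The deletions occur precisely at the levels $m(\ceil{\frac{k}{2}}+1)$ for $1 \le m \le \floor{\frac{l+1}{\ceil{\frac{k}{2}}+1}}$, and $V'$ contains every vertex on such a level. The removal therefore splits $\Trl$ into three families of components: bottom pieces (each rooted at a vertex on level $\ceil{\frac{k}{2}}$ and extending down to the leaves), inter-layer pieces (each rooted at a vertex on level $(m+1)(\ceil{\frac{k}{2}}+1)-1$ and extending down to level $m(\ceil{\frac{k}{2}}+1)+1$), and possibly a top piece occupying levels $L(\ceil{\frac{k}{2}}+1)+1$ through $l+1$ where $L=\floor{\frac{l+1}{\ceil{\frac{k}{2}}+1}}$. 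A routine level count shows each of these is a perfect $r$-ary tree of height at most $\ceil{\frac{k}{2}}-1$, so its diameter is at most $2\ceil{\frac{k}{2}}-2 < k$. Lemma \ref{ifandonly} then certifies $V'$ as a disconnecting set, giving $\CV{\Trl} \le |V'|$.

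For the lower bound, for each $\vij \in V'$ I would build a $k$-path lying inside the subtree $T_{\vij}$ (of height $\ceil{\frac{k}{2}}$) introduced just before the theorem. Since the formula's denominator $r^{\ceil{\frac{k}{2}}+1}-1$ rules out $r=1$, $\vij$ has at least two children in $T_{\vij}$; I descend to a leaf at depth $\ceil{\frac{k}{2}}$ through one child and to a descendant at depth $\floor{\frac{k}{2}}$ through another, producing a path through $\vij$ of length $\ceil{\frac{k}{2}} + \floor{\frac{k}{2}} = k$. To see that the paths associated with distinct $\vij, \vipjp \in V'$ are vertex-disjoint, it suffices to verify that the subtrees $T_{\vij}$ and $T_{\vipjp}$ are themselves disjoint: vertices at a common deletion level have disjoint descendant sets, while if $\vij$ sits on level $m(\ceil{\frac{k}{2}}+1)$ below $\vipjp$ at level $m'(\ceil{\frac{k}{2}}+1)$, then the lowest level appearing in $T_{\vipjp}$ is $m'(\ceil{\frac{k}{2}}+1)-\ceil{\frac{k}{2}}$, which strictly exceeds the highest level $m(\ceil{\frac{k}{2}}+1)$ appearing in $T_{\vij}$. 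Lemma \ref{lowerbound} then supplies $\CV{\Trl} \ge |V'|$.

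The main obstacle I anticipate is the boundary bookkeeping: verifying carefully that the top piece's height really is at most $\ceil{\frac{k}{2}}-1$ in the non-divisible case when $(l+1)$ is not a multiple of $\ceil{\frac{k}{2}}+1$, and separating the parity cases for $k$ in the $k$-path construction (an asymmetric split for odd $k$, a symmetric split for even $k$). Both reduce to straightforward level counting, after which the theorem follows directly from Lemma \ref{cardvprimevertex} combined with the two matching bounds.
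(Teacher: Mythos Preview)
Your proposal is correct and follows essentially the same route as the paper: exhibit $V'$ as a disconnecting set by checking that every component of $\Trl-V'$ is a perfect $r$-ary tree of height at most $\ceil{k/2}-1$, and obtain the matching lower bound by locating a $k$-path inside each of the pairwise disjoint subtrees $T_{\vij}$ for $\vij\in V'$ and applying Lemma~\ref{lowerbound}. Your write-up is in fact slightly more explicit than the paper's (the asymmetric descent for odd $k$, the observation that the formula forces $r\ge 2$, and the level count for the top piece), but the strategy and the key disjointness argument are the same.
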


\begin{proof} 
Let $r, l$, and $k$ be positive integers. Let $\Trl=(V,E)$. 

First we will establish that $$\CV{\Trl} \geq \frac{r^{l+1}-r^{l+1-\floor{\frac{l+1}{\ceil{\frac{k}{2}
}+1}}\left(\ceil{\frac{k}{2}}+1\right)}}{r^{\ceil{\frac{k}{2}}+1}-1}$$ by finding a set of disjoint $k$-pairs. Let $$V'=\left\{v_{m \left(\ceil{\frac{k}{2}}+1\right),j} \in V: 1 \leq m \leq \floor{\frac{l+1}{\ceil{\frac{k}{2}}+1}}, 1\leq j \leq r^{l+1-m\left(\ceil{\frac{k}{2}}+1\right)}\right\}.$$ For each $v_{i,j} \in V'$ let $T_{v_{i,j}}$ be the subgraph induced on $v_{i,j}$ and all of its descendants at a distance at most $\ceil{\frac{k}{2}}$. Notice $i>\ceil{\frac{k}{2}}$ for each $v_{i,j} \in V'$ and $T_{v_{i,j}}$ is a perfect $r$-ary tree with height $\ceil{\frac{k}{2}}$ for all $v_{i,j} \in V'$. Since the diameter of a perfect $r$-ary tree of height $a$ is $2a$, we have $diam(T_{v_{i,j}})=2\ceil{\frac{k}{2}}$ for each $v_{i,j} \in V'$. This implies, since $k$ is an integer, $$k+1\geq diam(T_{v_{i,j}}) \geq k.$$ Since $diam(T_{v_{i,j}}) \geq k$, $T_{v_{i,j}}$ contains at least one $k$-pair and, thus, is not in a failure state.

Let $\vij, \vijp \in V'$ with $j \neq j'$. We will show that all $T_{\vij}$ and $T_{\vijp}$ are disjoint. Since $\vij$ and $\vijp$ have a common ancestor, they cannot share any decendants, or $\Trl$ would contain a cycle and is not a tree. Therefore, $T_{\vij}$ and $T_{\vijp}$ are disjoint. 

Consider $\vij, \vipjp \in V'$ where $i \neq i'$. We will show that $T_{\vij}$ and $T_{\vipjp}$ are disjoint. By the definition of $V'$, vertices on different levels in $V'$ are at a distance of at least $\ceil{\frac{k}{2}}+1$ from each other, so $v_{i,j} \not \in T_{\vipjp}$ and $\vipjp \not \in T_{\vij}$. Since $\vij$ and $\vipjp$ are the roots of $T_{\vij}$ and $T_{\vipjp}$ respectively, and $T_{\vij}$ and $T_{\vipjp}$ are trees of height $\ceil{\frac{k}{2}}$, this implies $T_{\vij}$ and $T_{\vijp}$ are disjoint.

Therefore, we have shown that $\{T_{\vij}: \vij \in V'\}$ are pairwise disjoint.

Since $T_{v_{i,j}}$ contains a $k$-pair for each $\vij \in V'$, there are at least $|V'|$ disjoint $k$-pairs in $\Trl$. Therefore, by Corollary \ref{lowerbound} and Lemma \ref{cardvprimevertex}, 

$$\CV{\Trl} \geq \frac{r^{l+1}-r^{l+1-\floor{\frac{l+1}{\ceil{\frac{k}{2}}+1}}(\ceil{\frac{k}{2}}+1)}}{r^{\ceil{\frac{k}{2}}+1}-1}.$$ 

Now we will show that $$\CV{\Trl} \leq \frac{r^{l+1}-r^{l+1-\floor{\frac{l+1}{\ceil{\frac{k}{2}
}+1}}\left(\ceil{\frac{k}{2}}+1\right)}}{r^{\ceil{\frac{k}{2}}+1}-1}$$ by showing $\Trl - V'$ is in a failure state.

Consider $\Trl-V'$. By deleting all vertices in $V'$ from $\Trl$, we are deleting entire levels of vertices. In fact, we are deleting all vertices which are on a level which is an integer multiple of $\ceil{\frac{k}{2}}+1$. Notice, then, that $\Trl-V'$ is a disconnected graph where each component is a perfect $r$-ary tree. All of these trees have height $\ceil{\frac{k}{2}}-1$, except the tree containing the root vertex, which could have less. Then, $\di{T_{r,\ceil{\frac{k}{2}}-1}}=2(\ceil{\frac{k}{2}}-1) \leq 2(\frac{k+1}{2})-2 \leq k-1$. Therefore, each component has diameter less than $k$, and $G-V'$ is in a failure state. Hence, $$\CV{\Trl} \leq \frac{r^{l+1}-r^{l+1-\floor{\frac{l+1}{\ceil{\frac{k}{2}}+1}}\left(\ceil{\frac{k}{2}}+1\right)}}{r^{\ceil{\frac{k}{2}}+1}-1}.$$

Combining these two inequalities, we see that 
$$\CV{\Trl}=\frac{r^{l+1}-r^{l+1-\floor{\frac{l+1}{\ceil{\frac{k}{2}}+1}}\left(\ceil{\frac{k}{2}}+1\right)}}{r^{\ceil{\frac{k}{2}}+1}-1}.$$
\end{proof}




\section{Mixed Deletion Results}

Now we will investigate the $k$-diameter component connectivity function of a few simple graph classes. We provide results for path graphs, cycles, complete graphs, and complete bipartite graphs. As with the previous section, we will assume throughout that $k \geq 2$ is a positive integer.

\subsection{Path Graphs}

To decompose path graphs into failure states, we will create path graphs components of maximum length which are in a failure state.

\begin{theorem}
	\label{mixedpaths}
	Let $P_n$ be the path on $n$ vertices. For any nonnegative integer $p \leq \CV{P_n}$,
	
	$$\CM{P_n,p}=\begin{cases} 
					 0 & \text{if } p=\floor{\frac{n}{k+1}} \\
					 \\
					 \floor{\frac{n-p(k+1)-1}{k}} & \text{if } p < \floor{\frac{n}{k+1}}.\\
 \end{cases}$$

	\end{theorem}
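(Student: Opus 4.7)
The plan is to treat the two cases separately. When $p = \floor{\frac{n}{k+1}} = \CV{P_n}$, Theorem \ref{vertexpath} already furnishes a $p$-element vertex set whose removal puts $P_n$ into a failure state, so no edge deletions are needed and $\CM{P_n,p}=0$ follows immediately.

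For $p < \floor{\frac{n}{k+1}}$, I would prove matching upper and lower bounds. For the \emph{upper bound}, I would exhibit the explicit $p$-set $V'=\{v_{k+1}, v_{2(k+1)}, \ldots, v_{p(k+1)}\}$, with the vertices of $P_n$ labeled $v_1, \ldots, v_n$ along the path. Since $p+1 \leq \floor{\frac{n}{k+1}}$, the indices are valid and $p(k+1) \leq n-(k+1)$, so $P_n - V'$ is the disjoint union of $p$ paths on exactly $k$ vertices each (each of diameter $k-1 < k$, hence already in failure state) together with one trailing path on $n - p(k+1) \geq k+1$ vertices. By the path edge-connectivity result from \cite{ShankBuzz}, the trailing component contributes $\CE{P_{n-p(k+1)}} = \floor{\frac{n-p(k+1)-1}{k}}$ edge deletions, yielding the desired upper bound.

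For the \emph{lower bound}, consider any $V' \subseteq V(P_n)$ with $|V'|=p$. Because every vertex of $P_n$ has degree at most two, each vertex deletion raises the component count by at most one, so $P_n-V'$ is a disjoint union of $u \leq p+1$ nonempty paths with vertex counts $r_1,\ldots,r_u$ satisfying $\sum r_i = n-p$. Additivity of $\CE_k$ over disjoint components together with the single-path identity $\CE{P_r} = \floor{\frac{r-1}{k}} = \ceil{\frac{r}{k}} - 1$ give
\[ \CE{P_n - V'} \;=\; \sum_{i=1}^{u}\ceil{\frac{r_i}{k}} \;-\; u \;\geq\; \ceil{\frac{n-p}{k}} \;-\; (p+1), \]
where the inequality combines $u \leq p+1$ with the elementary fact $\sum_i \ceil{\frac{r_i}{k}} \geq \ceil{\frac{\sum_i r_i}{k}}$. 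Applying $\ceil{\frac{m}{k}} = \floor{\frac{m-1}{k}}+1$ with $m = n-p$ simplifies the right-hand side to $\floor{\frac{n-p(k+1)-1}{k}}$, matching the upper bound.

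The main obstacle is mostly bookkeeping: handling the cases where a deleted vertex is a pendant or two deleted vertices are adjacent (each only shrinks $u$, which only strengthens the bound) and the algebraic shuttling between ceilings and floors. Neither is delicate, and the governing insight is that spacing cuts every $k+1$ positions along the path is simultaneously extremal for the explicit construction and tight against the counting lower bound. Notice that the disjoint $k$-paths bound from Lemma \ref{lowerbound} is generally too weak here, which is why the argument has to count components rather than just $k$-paths.
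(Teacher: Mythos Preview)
Your argument is correct and follows essentially the same approach as the paper. The upper-bound construction is identical; for the lower bound you repackage the paper's one-shot component count (at most $p+q+1$ components after all deletions, each with at most $k$ vertices, forcing $n\le(p+q+1)k+p$ and hence a contradiction when $q=\floor{\frac{n-p(k+1)-1}{k}}-1$) as a two-stage argument via the single-path formula $\CE{P_r}=\lceil r/k\rceil-1$ and ceiling superadditivity, but the underlying inequality is the same.
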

	
	\begin{proof} 

Let $n$ be a positive integer. Consider $P_n$ and let $p\leq \CV{P_n}$ be a nonnegative integer. If $p=\floor{\frac{n}{k+1}}$, then by Theorem \ref{vertexpath}, $\CM{P_n, p}=0$.

	Assume $p < \floor{\frac{n}{k+1}}$. Label vertices of $P_n$ consecutively starting at a pendant vertex so that $V=\{v_i : i\in \ZZ, 1 \leq i \leq n\}$. Let $V' \subset V$ such that $V'=\{v_{j(k+1)} : j \in \ZZ, 1 \leq j \leq p\}$. Then $P_n-V'$ has $p$ components with diameter $k-1$ and one component, denoted $C$, which is a path on $n-p(k+1)$ vertices. By Theorem 3.1 in \cite{ShankBuzz}, $\CE{C}=\floor{\frac{n-p(k+1)-1}{k}}$.
	Let $E'$ be the set of $\floor{\frac{n-p(k+1)-1}{k}}$ edges removed from component $C$ to produce a failure state. Then $P_n-V'-E'$ is in a failure state. Therefore, $\CM{P_n,p} \leq \floor{\frac{n-p(k+1)-1}{k}}$.

	Assume we are going to delete a set, $P$, of vertices where $|P|=p$ and a set, $Q$, of edges where $|Q|= q$ so that $P_n - P - Q$ is in a failure state. Since $P_n$ is a tree, any edge or vertex deletion creates at most one new component. Therefore, $P_n - P - Q$ can have at most $p+q+1$ components. If $P_n-P-Q$ is in a failure state, then each component contains at most $k$ vertices. Thus, the number of vertices in our original graph is the sum of all the vertices in the failed components plus the $p$ vertices we had to delete from $P$. Therefore, the number of vertices in our original graph would be bounded above by $(p+q+1)k +p.$ We will now use this fact to show that $\CM{P_n,p} =\floor{\frac{n-p(k+1)-1}{k}}$. 
	
	Assume by way of contradiction that there exists some set $V^* \subseteq V$ with $|V^*|=p$ and $E^* \subseteq E$ with $|E^*|=\floor{\frac{n-p(k+1)-1}{k}}-1$ such that $P_n-V^*-E^*$ is in a failure state. As shown above, if $P_n-V^*-E^*$ is in a failure state, then the number of vertices in $P_n$ is bounded above by 
$$|V(P_n)| \leq \left(p+\left(\floor{\frac{n-p(k+1)-1}{k}}-1\right)+1\right)k+p.$$

Simplifying this expression, we see
\begin{align*}
|V(P_n)| & \leq p(k+1)+k\floor{\frac{n-p(k+1)-1}{k}} \\
&\leq p(k+1)+k\left(\frac{n-p(k+1)-1}{k}\right)\\
& =n-1.
\end{align*}

Hence, we have a contradiction, because $|V(P_n)|=n$. 	
	
	\end{proof}

\subsection{Cycle Graphs} A cycle is only one vertex deletion away from becoming a path graph. Therefore, we can use our results for path graphs to analyze cycle graphs. 
	\begin{theorem}
	\label{mixedcycles}
	Consider $C_n$, the cycle on $n$ vertices. For any positive integer $n$ and nonnegative integer $p\leq \CV{C_n}$, 

$$\CM{C_n,p}=\begin{cases} 
					 0 & \text{if } k>\floor{\frac{n}{2}} \\
					 0 & \text{if }  k \leq \floor{\frac{n}{2}} \text{ and } p=\floor{\frac{n+k}{k+1}} \\
					 \floor{\frac{n-p(k+1)-1}{k}}+1 & \text{if } k \leq \floor{\frac{n}{2}} \text{and } 0< p < \floor{\frac{n+k}{k+1}}.\\
 \end{cases}$$
	\end{theorem}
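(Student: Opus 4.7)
The plan is to verify the three cases of the statement in turn. The first case ($k > \floor{n/2}$) is immediate: since $\di{C_n} = \floor{n/2} < k$, the graph $C_n$ is already in a failure state and no deletions are needed. The second case ($p = \floor{(n+k)/(k+1)}$) is equally quick: by Theorem~\ref{vertexcycle} this value of $p$ equals $\CV{C_n}$, so the optimal vertex deletions alone produce a failure state and $\CM{C_n, p} = 0$.

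For the third case I would prove matching upper and lower bounds. For the upper bound, label $V(C_n) = \{v_1, \ldots, v_n\}$ cyclically and take $V' = \{v_{j(k+1)} : 1 \leq j \leq p\}$. The hypothesis $p < \floor{(n+k)/(k+1)}$ forces $p(k+1) \leq n - 1$, so these $p$ vertices are distinct. Removing $V'$ breaks $C_n$ into $p$ path components: $p-1$ of them are paths on exactly $k$ vertices (diameter $k-1$, hence already in a failure state), and the remaining ``wrap-around'' component $C$ is a path on $n - p(k+1) + k \geq k+1$ vertices. Applying Theorem~\ref{mixedpaths} to $C$ with $p'=0$ gives
\[
\CE{C} \;=\; \floor{\frac{n - p(k+1) + k - 1}{k}} \;=\; \floor{\frac{n - p(k+1) - 1}{k}} + 1,
\]
and removing that many edges from $C$ puts the whole graph in a failure state, yielding the upper bound.

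For the lower bound, suppose $V^* \subseteq V(C_n)$ with $|V^*|=p \geq 1$ and $E^* \subseteq E(C_n - V^*)$ with $|E^*|=q$ render $C_n$ into a failure state. Deleting a single vertex from $C_n$ yields a path, and every subsequent vertex or edge deletion in the resulting forest increases the component count by at most one. Hence $C_n - V^* - E^*$ has at most $p + q$ components. By Lemma~\ref{ifandonly}, each component has at most $k$ vertices, so $n - p \leq k(p+q)$. Since $q$ is a nonnegative integer, this gives
\[
q \;\geq\; \ceil{\frac{n - p(k+1)}{k}} \;=\; \floor{\frac{n - p(k+1) - 1}{k}} + 1,
\]
matching the upper bound.

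The main obstacle I anticipate is the component-counting argument in the lower bound: the very first vertex deletion from $C_n$ breaks the cycle into a path without creating a new component, and later deletions may include cyclically adjacent vertices that likewise produce no extra component. These effects only strengthen the inequality $n - p \leq k(p+q)$, but the bookkeeping must be phrased uniformly so that ``at most $p+q$ components'' covers all configurations. A minor technicality is the identity $\ceil{m/k} = \floor{(m-1)/k} + 1$ used to convert between the natural ceiling form from the counting argument and the floor form appearing in the theorem; this is standard and the nonnegativity $n - p(k+1) \geq 1$, guaranteed by $p < \CV{C_n}$, ensures it applies.
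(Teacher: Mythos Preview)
Your argument is correct. The two trivial cases match the paper exactly, and for the third case your matching upper and lower bounds are sound: the explicit vertex set $V'=\{v_{j(k+1)}:1\le j\le p\}$ does leave $p-1$ paths of order $k$ together with one long path of order $n-p(k+1)+k$, and the component-counting inequality $n-p\le k(p+q)$ combined with the identity $\ceil{m/k}=\floor{(m-1)/k}+1$ for $m\ge 1$ gives the matching lower bound.

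The paper takes a shorter route for the third case. Instead of building separate upper and lower bounds, it observes that for $p\ge 1$ one has $\CM{C_n,p}=\CM{P_{n-1},p-1}$, since removing any one vertex of $C_n$ yields $P_{n-1}$ and conversely any failure configuration on $C_n$ with $p\ge 1$ vertex deletions restricts (after singling out one deleted vertex) to a failure configuration on $P_{n-1}$ with $p-1$ vertex deletions and the same edge deletions. It then simply invokes Theorem~\ref{mixedpaths} with parameters $(n-1,p-1)$ and simplifies. What the paper buys is brevity: the counting argument you carry out is already packaged inside the proof of Theorem~\ref{mixedpaths}. What your approach buys is transparency: you never rely on the equality $\CM{C_n,p}=\CM{P_{n-1},p-1}$, which the paper asserts in one line, and your lower bound stands on its own without needing the full mixed path result.
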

	
	\begin{proof} 
	Let $C_n$ be the cycle graph on $n$ vertices. Note that $\di{C_n}=\floor{\frac{n}{2}}$, so if $k >\floor{\frac{n}{2}}$, $C_n$ is already in a failure state and requires no deletions. Consider when $p=\floor{\frac{n+k}{k+1}}$. Then by Theorem \ref{vertexcycle}, there exists a set $V'$ of $p$ vertices such that $C_n-V'$ is in a failure state. Hence, if $p=\floor{\frac{n+k}{k+1}}$, then $\CM{C_n,p}=0$. 
	
	Consider when $k \leq \floor{\frac{n}{2}}$ and $0< p < \floor{\frac{n+k}{k+1}}$. Note that any vertex deletion leaves a path on $n-1$ vertices, hence, $\CM{C_n,p}=\CM{P_{n-1}, p-1}$. Therefore, by Theorem \ref{mixedpaths} $\CM{C_n,p}=\floor{\frac{(n-1)-(p-1)(k+1)-1}{k}}$. Simplifying this expression, we see 
	$$\CM{C_n,p}=\floor{\frac{n-p(k+1)-1}{k}}+1.$$
	\end{proof} 
	
\subsection{Complete Graphs}

Consider the complete graph on $n$ vertices, denoted $K_n$. Since any complete graph has diameter 1, we see $K_n$ is already in a failure state since we are assuming $k\geq 2$. Thus, we see the following obvious result:
	\begin{theorem}
	For every positive integer $n$, $$\CM{K_n, p}=0.$$
\end{theorem}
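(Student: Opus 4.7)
The plan is to reduce the claim to the observation that $K_n$ is already in a failure state, so no deletions of any kind are needed. First I would invoke Theorem \ref{vertexcomplete}, which gives $\CV{K_n}=0$; hence Definition \ref{minedge} restricts $p$ to the single value $p=0$, and the statement amounts to showing $\CM{K_n,0}=\CE{K_n}=0$.

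Next I would apply Lemma \ref{ifandonly}: it suffices to verify that $K_n$ itself contains no $k$-pair. For $n\leq 1$ there are no pairs at all; for $n\geq 2$ every two distinct vertices of $K_n$ are at distance exactly $1$, so $\di{K_n}=1$. Since we are assuming $k\geq 2$, no vertex of $K_n$ has a $k$-neighbor, and so by Lemma \ref{ifandonly}, $K_n$ is already in a failure state. Thus the empty set $E'=\emptyset$ is a $k$-diameter component edge disconnecting set of $K_n-\emptyset=K_n$, giving $\CE{K_n}=0$ and hence $\CM{K_n,0}=0$.

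There is no real obstacle here: the result follows immediately from the diameter of $K_n$ being strictly less than $k$. The theorem is stated mainly for completeness alongside the other graph classes treated in this section.
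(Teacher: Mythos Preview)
Your argument is correct and is essentially the same as the paper's: observe that $\di{K_n}=1<k$, so $K_n$ is already in a failure state and no deletions are needed. You add the explicit unpacking via Theorem~\ref{vertexcomplete}, Definition~\ref{minedge}, and Lemma~\ref{ifandonly}, but the substance is identical.
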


\subsection{Complete Bipartite Graphs}
\begin{theorem}
Consider the complete bipartite graph $K_{(a,b)}$ where $a$ and $b$ are positive integers with $a \leq b$. Then for any nonnegative integer $p \leq \CV{K_{a,b}}$,

$$\CM{K_{a,b},p}=\begin{cases} 
					 0 & \text{if } a=b=1 \\
					 0 & \text{if } k>2 \text{ and } b > 1\\
					 (a-p)(b-1) & \text{if } k=2 \text{ and } b >1. \\
 \end{cases}$$
\end{theorem}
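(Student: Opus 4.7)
The plan is to handle each branch of the piecewise formula in turn. When $a=b=1$, the graph $K_{1,1}$ has diameter $1<k$; when $b>1$ and $k>2$, the graph $K_{a,b}$ has diameter $2<k$. In either situation $K_{a,b}$ is already in a failure state, so $\CM{K_{a,b},p}=0$ for the only admissible value $p=0$.

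The substantive case is $k=2$ with $b>1$. The key reformulation is that, since $k=2$, a subgraph of $K_{a,b}$ is in a failure state if and only if no vertex has two neighbors, i.e., every vertex has degree at most $1$ and the remaining edges form a matching. This immediately yields $\CE{K_{a',b'}}=a'b'-\min(a',b')$ for all positive integers $a',b'$, since a maximum matching in $K_{a',b'}$ has size $\min(a',b')$.

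For the upper bound I would take the optimal $p$-set to be $p$ vertices from the smaller side $A$ (feasible since $p\leq\CV{K_{a,b}}=a$). The remaining graph $K_{a-p,b}$ then requires exactly $(a-p)b-(a-p)=(a-p)(b-1)$ edge deletions, establishing $\CM{K_{a,b},p}\leq (a-p)(b-1)$.

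For the lower bound, any vertex set $V'$ with $|V'|=p$ removes $p_A$ vertices from $A$ and $p_B=p-p_A$ from $B$, leaving $K_{a-p_A,b-p_B}$, which then requires $m(M-1)$ further edge deletions, where $m\leq M$ denote the two side sizes (so $m+M=a+b-p$). The main task is to show $m(M-1)\geq (a-p)(b-1)$ for every feasible choice, which I would do via the algebraic identity
\[
m(M-1) \;=\; (a-p)(b-1) \;-\; \bigl(m-(a-p)\bigr)\bigl(m-(b-1)\bigr)
\]
combined with the claim that $m$ always satisfies $\bigl(m-(a-p)\bigr)\bigl(m-(b-1)\bigr)\leq 0$. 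The lower bound $m\geq a-p$ is immediate because both $a-p_A$ and $b-p_B$ are at least $a-p$, and a short case analysis shows $m\leq b-1$ in all cases except the corner $p=0$ with $a=b$, where $m=a-p$ makes the first factor zero. The main obstacle is really just packaging this minimization cleanly; once it is in place, combining the two bounds yields the theorem.
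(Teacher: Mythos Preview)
Your proof is correct and follows the same overall structure as the paper's: dispose of the $a=b=1$ and $k>2$ cases by observing the diameter is already below $k$; get the upper bound by deleting $p$ vertices from the smaller side $A$ and invoking the edge formula for $K_{a-p,b}$; and get the lower bound by showing that any other split $(p_A,p_B)$ of the $p$ deleted vertices yields $\CE{K_{a-p_A,\,b-p_B}}\geq(a-p)(b-1)$.

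The execution of the lower bound is where you diverge. The paper quotes the value of $\CE{K_{a',b'}}$ from an external reference and then argues by contradiction, splitting into two cases according to whether $a-x\leq b-y$ or $b-y<a-x$ and carrying through a separate chain of inequalities in each. You instead derive $\CE{K_{a',b'}}=a'b'-\min(a',b')$ directly from the observation that a bipartite graph is in a $k{=}2$ failure state exactly when its edge set is a matching, and then handle the minimization with the single identity $m(M-1)=(a-p)(b-1)-\bigl(m-(a-p)\bigr)\bigl(m-(b-1)\bigr)$, reducing everything to the bounds $a-p\leq m$ and $m\leq b-1$ (with the boundary exception you note). Your packaging is tighter and self-contained, making the minimization transparent; the paper's case split is more pedestrian but arrives at the same inequality. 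Both routes are valid and rest on the same underlying structure.
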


\begin{proof} 

Let $k$ be a positive integer. Consider the complete bipartite graph, denoted $K_{a,b}=(V,E)$ with parts $A$ and $B$ where $V=A \cup B$, $A \cap B =\emptyset$, $|A|=a>0$, and $|B|=b>0$. Furthermore, assume without loss of generality that $a \leq b$. 

If $a=b=1$, then $\di{K_{a,b}}=1$ and $K_{a,b}$ is in a failure state.
If $b \geq 2$, then $\di{K_{a,b}}=2$. If $k>2$, then $K_{a,b}$ is in a failure state.
If $k=2$, then $K_{a,b}$ is not in the failure state. Therefore, fix $k=2$ for the remainder of the proof. 

 Note that min$\{a,b\}=a$, so Theorem \ref{vertexcompbipart} implies that $\CM{K_{a,b},a}=0$. Thus, assume $p<a$. Let $V'$ be a set of vertices deleted from $K_{a,b}$ such that $|V'|=p$.
We claim that $V' \subseteq A$ is an optimal $p$-set. In other words, it is optimal to delete all $p$ vertices from part $A$ of $K_{a,b}$. Then by, Theorem 3.3 of \cite{ShankBuzz}, $\CM{K_{(a,b)}}=\CE{K_{a-p,b}}=(a-p)(b-1)$.

Assume by way of contradiction that $V' \subseteq A$ is not an optimal $p$-set. Then there exists an optimal $p$-set $V^*=V_A \cup V_B$, where $V_A \subseteq A$, $V_B \subseteq B$, $|V_A|=x$, $|V_B|=y$, $y \geq 1$ and $x+y=p$. In other words, it is optimal to delete some vertices from part $A$ and some from part $B$ of $K_{a,b}$. Then, we have the following two cases.

\noindent Case 1: Assume $a-x \leq b-y$. Then if $V^*$ is an optimal $p$-set, by Theorem 3.3 of \cite{ShankBuzz}, $\CM{K_{a,b},p}=\CE{K_{a-x,b-y}}=(a-x)(b-y-1)$.

\noindent Note that $y \geq 1$ implies 
$$y(a-x) \leq y(b-1).$$
Then, by substituting $y=p-x$ on the right side of this inequality, we see
$$x(b-1)+y(a-x) \leq p(b-1).$$
Adding $a(b-1)$ to both sides of the above inequality and simplifying, we see
$$(a-p)(b-1) \leq (a-x)(b-y-1).$$

Hence, $\CE{K_{a-p,b}} \leq \CE{K_{a-x,b-y}}$.

\noindent Case 2: Assume $b-y <a-x$. Then if $V^*$ is an optimal $p$-set, by Theorem 3.3 of \cite{ShankBuzz}, $\CM{K_{a,b},p}=\CE{K_{a-x,b-y}}=(b-y)(a-x-1)$. 

\noindent Note that $a \leq b$ implies that $a-x \leq b$. Multiplying by $(y-1)$ and then adding $b(a-x)$ to both sides of this inequality, we see
$$(a-x)b+y(a-x)-(a-x) \leq (a-x)b+y(b-1)-b+y.$$ 
Simplifying this expression, we see
$$(a-x-y)(b-1) \leq (b-y)(a-x-1).$$
Then substituting $p=x+y$, we see
$$(a-p)(b-1) \leq (b-y)(a-x-1).$$ 

Hence, $\CE{K_{a-p,b}} \leq \CE{K_{a-x,b-y}}$.

In either of these two cases, $\CE{K_{a-p,b}}=(a-p)(b-1) \leq \CE{K_{a-x,b-y}}$. Hence, $V'$ is an optimal $p$-set and

$$\CM{K_{a,b},p}=\begin{cases} 
					 0 & \text{if } a=b=1 \\
					 0 & \text{if } k>2 \text{ and } b > 1\\
					 (a-p)(b-1) & \text{if } k=2 \text{ and } b >1. \\
 \end{cases}$$
 
 \end{proof}

\nocite{*}


\providecommand{\bysame}{\leavevmode\hbox to3em{\hrulefill}\thinspace}
\providecommand{\MR}{\relax\ifhmode\unskip\space\fi MR }
\providecommand{\MRhref}[2]{%
  \href{http://www.ams.org/mathscinet-getitem?mr=#1}{#2}
}
\providecommand{\href}[2]{#2}

\end{document}